\newcommand{\R}{\mathbb{R}}
\newcommand{\cS}{{\cal S}}
\newcommand{\tv}{\tilde{v}}
\def\beq{\begin{equation}}
\def\eeq{\end{equation}}
\def\eqnok#1{(\ref{#1})}
\newtheorem{theorem}{Theorem}
\begin{document}
%
\title{On GROUSE and Incremental SVD}

\author{\IEEEauthorblockN{Laura Balzano}
\IEEEauthorblockA{University of Michigan\\
girasole@umich.edu}
\and
\IEEEauthorblockN{Stephen J. Wright}
\IEEEauthorblockA{University of Wisconsin, Madison \\
swright@cs.wisc.edu}}


%


\maketitle

\begin{abstract}

GROUSE (Grassmannian Rank-One Update Subspace
Estimation)~\cite{Balzano10b} is an incremental algorithm for
identifying a subspace of $\R^n$ from a sequence of vectors in this
subspace, where only a subset of components of each vector is revealed
at each iteration. Recent analysis~\cite{grouseconverge} has shown
that GROUSE converges locally at an expected linear rate, under
certain assumptions. GROUSE has a similar flavor to the incremental
singular value decomposition algorithm~\cite{Bunch78}, which updates
the SVD of a matrix following addition of a single column. In this
paper, we modify the incremental SVD approach to handle missing data,
and demonstrate that this modified approach is equivalent to GROUSE,
for a certain choice of an algorithmic parameter.


\end{abstract}

\section{INTRODUCTION}

Subspace estimation and singular value decomposition have been
important tools in linear algebra and data analysis for several
decades. They are used to understand the principal components of a
signal, to reject noise, and to identify best approximations.

The GROUSE (Grassmannian Rank-One Update Subspace Estimation)
algorithm, described in \cite{Balzano10b}, aims to identify a subspace
of low dimension, given data consisting of a sequence of vectors in
the subspace that are missing many of their components. Missing data
is common in such big-data applications as low-cost sensor networks
(in which data often get lost from corruption or bad communication
links), recommender systems (where we are missing consumers' opinions
on products they have yet to try), and health care (where a patient's
health status is only sparsely sampled in time).  GROUSE was developed
originally in an online setting, to be used with streaming data or
when the principal components of the signal may be
time-varying. Several subspace estimation algorithms in the
past~\cite{comon90} have also been developed for the online case and
have even used stochastic gradient, though GROUSE and the approach
described in~\cite{brand2002incremental} are the first to deal with
missing data.

Recent developments in the closely related field of matrix completion
have shown that low-rank matrices can be reconstructed from limited
information, using tractable optimization
formulations~\cite{CandesRecht09,RechtImprovedMC09}.  Given this
experience, it is not surprising that subspace identification is
possible even when the revealed data is incomplete, under appropriate incoherence 
assumptions and using appropriate algorithms.

GROUSE maintains an $n \times d$ matrix with orthonormal columns that
is updated by a rank-one matrix at each iteration. The update strategy
is redolent of other optimization appoaches such as gradient
projection, stochastic gradient, and quasi-Newton methods. It is
related also to the incremental singular value decomposition
approach of \cite{Bunch78}, in which the SVD of a matrix is updated
inexpensively after addition of a column.  We aim in this note to
explore the relationship between the GROUSE and incremental SVD
approaches. We show that when the incremental SVD approach is modified
in a plausible way (to handle missing data, among other issues), we
obtain an algorithm that is equivalent to GROUSE.

\section{GROUSE} \label{sec:grouse:partial}

The GROUSE algorithm was developed for identifying an unknown subspace
$\cS$ of dimension $d$ in $\R^n$ from a sequence of vectors $v_t \in
\cS$ in which only the components indicated by the set $\Omega_t
\subset \{1,\dots,n\}$ are revealed. Specifically, when $\bar{U}$ is
an (unknown) $n \times d$ matrix whose orthonormal columns span $\cS$,
and $s_t \in \R^d$ is a weight vector, we observe the following
subvector at iteration $t$:
\beq \label{eq:vt}
(v_t)_{\Omega_t} = (\bar{U} s_t)_{\Omega_t}
\eeq 
(We use the subscript $\Omega_t$ on a matrix or vector to indicate restriction
to the rows indicated by $\Omega_t$.)

GROUSE is described as Algorithm~\ref{grouse:partial}. It generates a
sequence of $n \times d$ matrices $U_t$ with orthonormal columns,
updating with a rank-one matrix at each iteration in response to the
newly revealed data $(v_t)_{\Omega_t}$. Note that GROUSE makes use of
a steplength parameter $\eta_t$. It was shown in~\cite{grouseconverge}
that GROUSE exhibits local convergence of the range space of $U_t$ to
the range space of $\bar{U}$, at an expected linear rate, under
certain assumptions including incoherence of the subspace $\cS$ with
the coordinate directions, the number of components in $\Omega_t$, and
the choice of steplength parameter $\eta_t$.

\begin{algorithm}
\caption{GROUSE} \label{grouse:partial}
\begin{algorithmic}
\STATE{Given $U_0$, an $n \times d$ orthonormal matrix, with $0<d<n$;}
\STATE{Set $t:=1$;}
\REPEAT 
\STATE{Take $\Omega_t$ and $(v_t)_{\Omega_t}$ from \eqref{eq:vt};}
\STATE{Define $w_t := \arg \min_w \|[U_t]_{\Omega_t} w - [v_t]_{\Omega_t} \|_2^2$;}
\STATE{Define $p_t := U_t w_t$; 
$[r_t]_{\Omega_t} := [v_t]_{\Omega_t}-[p_t]_{\Omega_t}$; \\
$[r_t]_{\Omega_t^C} := 0$;
$\sigma_t:= \|r_t\| \, \|p_t\|$;}
\STATE{Choose $\eta_t>0$ and set}
\begin{align}
U_{t+1} := U_t &+  \left(\cos (\sigma_t \eta_t)-1 \right) \frac{p_t}{\|p_t\|} \frac{w_t^T}{\|w_t\|} \nonumber \\ 
&+  \sin(\sigma_t \eta_t) \frac{r_t}{\|r_t\|} \frac{w_t^T}{\|w_t\|} \;. \label{eq:gpupdate}
\end{align}
\STATE{$t:=t+1$;}
\UNTIL{termination}
\end{algorithmic}
\end{algorithm}

\section{Incremental Singular Value Decomposition}\label{sec:isvd}

The incremental SVD algorithm of \cite{Bunch78} computes the SVD of a
matrix by adding one (fully observed) column at a time.  The size of
the matrices of left and right singular vectors $U_t$ and $V_t$ grows
as columns are added, as does the diagonal matrix of singular values
$\Sigma_t$. The approach is shown in Algorithm~\ref{isvd:full}. Note
that when the new vector $v_t$ is already in the range space of $U_t$,
we have $r_t=0$, and the basic approach can be modified to avoid
adding an extra dimension to the $U$, $V$, and $\Sigma$ factors in
this situation. If all vectors $v_t$ lie in a subspace $\cS$
of dimension $d$, the modified method will not need to grow $U_t$
beyond size $n \times d$.

\begin{algorithm}
\caption{Incremental SVD~\cite{Bunch78} } \label{isvd:full}
\begin{algorithmic}
\STATE{Start with null matrixes $U_0$, $V_0$, $\Sigma_0$;}
\STATE{Set $t:=0$;} 
\REPEAT
\STATE{Given new column vector $v_t$;} \STATE{Define $w_t
  := \arg \min_w \|U_t w - v_t\|_2^2 = U_t^T v_t$;} 
\STATE{Define}
\[
p_t := U_t w_t; \quad
r_t := v_t - p_t;
\] 
\STATE{(Set $r_0 := v_0$ when $t=0$);}
\STATE{Noting that}
\[
\left[ \begin{matrix} U_t \Sigma_t V_t^T & v_t \end{matrix} \right] =
\left[ \begin{matrix} U_t & \frac{r_t}{\|r_t\|} \end{matrix} \right]
\left[ \begin{matrix} \Sigma_t & w_t \\ 0 & \|r_t\| \end{matrix}
  \right] \left[ \begin{matrix} V_t &0 \\ 0 & 1 \end{matrix} \right]^T,
\]
\STATE{compute the SVD of the update matrix:}
\begin{equation}
\left[ \begin{matrix} \Sigma_t & w_t \\ 0 & \|r_t\| \end{matrix}
  \right]  =
\hat{U} \hat{\Sigma} \hat{V}^T,
\label{eq:updatewsigma}
\end{equation}
\STATE{and set}
\begin{align*}
U_{t+1} &:=  \left[ \begin{matrix} U_t & \frac{r_t}{\|r_t\|} \end{matrix} \right] \hat{U}, \quad
\Sigma_{t+1} := \hat{\Sigma}, \\
V_{t+1} &:=  \left[ \begin{matrix} V_t &0 \\ 0 & 1 \end{matrix} \right] \hat{V}.
\end{align*}
\STATE{$t:=t+1$;}
\UNTIL{termination}
\end{algorithmic}
\end{algorithm}

\section{Relating  GROUSE to Incremental SVD} \label{sec:isvdgrouse}

Algorithms~\ref{grouse:partial} and \ref{isvd:full} are motivated in
different ways and therefore differ in significant respects. We now
describe a variant --- Algorithm~\ref{isvd:partial} --- that is suited
to the setting addressed by GROUSE, and show that it is in fact
equivalent to GROUSE.  Algorithm~\ref{isvd:partial}, includes the
following modifications.
\begin{itemize}
\item Since only the subvector $(v_t)_{\Omega_t}$ is available, the
  missing components of $v_t$ (corresponding to indices in the
  complement $\Omega_t^C := \{1,2,\dotsc,n\} \setminus \Omega_t$) must
  be ``imputed'' from the revealed components and from the current
  subspace estimate $U_t$.
\item The singular value matrix $\Sigma_t$ is not carried over from
  one iteration to the next. In effect, the singular value estimates
  are all reset to $1$ at each iteration.
\item We allow an arbitrary rotation operator $W_t$ to be applied to
  the columns of $U_t$ at each iteration. This does not affect the
  range space of $U_t$, which is the current estimate of the
  underlying subspace $\cS$.
\item The matrix $U_t$ is not permitted to grow beyond $d$
  columns. 
\end{itemize}


\begin{algorithm}
\caption{iSVD for Partially Observed Vectors} \label{isvd:partial}
\begin{algorithmic}
\STATE{Given $U_0$, an $n \times d$ orthonormal matrix, with $0<d<n$;}
\STATE{Set $t:=1$;}
\REPEAT 
\STATE{Take $\Omega_t$ and $(v_t)_{\Omega_t}$ from \eqref{eq:vt};}
\STATE{Define $w_t := \arg \min_w \|(U_t)_{\Omega_t} w - (v_t)_{\Omega_t} \|_2^2$;}
\STATE{Define}
\begin{align*}
[\tv_t]_i &:= \left\{ \begin{matrix} [v_t]_i & i \in \Omega_t \\ [U_t w_t]_i  & i \in \Omega_t^C \end{matrix} \right. ; \\
p_t &:= U_t w_t; \quad
r_t := \tv_t - p_t;
\end{align*}
\STATE{Noting that}
\[
\left[ \begin{matrix} U_t & \tilde{v}_t \end{matrix} \right] =
\left[ \begin{matrix} U_t & \frac{r_t}{\|r_t\|} \end{matrix} \right] 
\left[ \begin{matrix} I & w_t \\ 0 & \|r_t \| \end{matrix} \right],
\]
\STATE{we compute the SVD of the update matrix:}
\beq \label{eq:isvd.util}
\left[ \begin{matrix} I & w_t \\ 0 & \|r_t \| \end{matrix} \right] =
\tilde{U}_t \tilde{\Sigma}_t \tilde{V}_t^T, 
\eeq
and define $\hat{U}_t$ to be the $(d+1) \times d$ matrix obtained by 
removing the last column from $\tilde{U}_t$.
\STATE{Set $U_{t+1} := \left[ \begin{matrix} U_t &
      \frac{r_t}{\|r_t\|} \end{matrix} \right] \hat{U} W_t$,
  where $W_t$ is an arbitrary $d \times d$ orthogonal matrix.}
\STATE{$t:=t+1$;}
\UNTIL{termination}
\end{algorithmic}
\end{algorithm}

Algorithm~\ref{isvd:partial} is quite similar to an algorithm
proposed in~\cite{brand2002incremental} (see Algorithm~\ref{isvd:brand})
but differs in its handling of the singular
values. In~\cite{brand2002incremental}, the singular values are
carried over from one iteration to the next, but previous estimates
are ``down-weighted'' to place more importance on the vectors
$(v_t)_{\Omega_t}$ from recent iterations. This feature is useful in a scenario in which 
the underlying subspace $\cS$ is changing in time. GROUSE also is influenced more 
by more recent vectors than older ones, thus has a similar (though less explicit) down-weighting feature.


We show now that for a particular choice of $\eta_t$ in
Algorithm~\ref{isvd:partial}, the Algorithms~\ref{grouse:partial} and
\ref{isvd:partial} are equivalent. Any difference in the updated
estimate $U_{t+1}$ is eliminated when we define the column rotation
matrix $W_t$ appropriately.

\begin{theorem} \label{th:isvd}
Suppose that at iteration $t$ of Algorithms~\ref{grouse:partial} and
\ref{isvd:partial}, the iterates $U_t$ are the same, and the new
observations $v_t$ and $\Omega_t$ are the same. Assume too that $w_t
\neq 0$ and $r_t \neq 0$. Define the following (related) scalar
quantities:
\begin{subequations}
\label{eq:isvd1-4}
\begin{align}
\lambda :=  & \frac12 (\|w_t\|^2 + \|r_t\|^2 + 1) + \nonumber \\
&\frac12 \sqrt{(\|w_t\|^2 + \|r_t\|^2 +1)^2 - 4 \|r_t \|^2}; \label{eq:isvd.1}\\
\label{eq:isvd.2}
\beta := & \frac{\|r_t\|^2+\|w_t\|^2}{\|r_t\|^2+\|w_t\|^2 + (\lambda - \|r_t\|^2)^2} \\
\label{eq:isvd.3}
\alpha := & \frac{\|r_t\| (\lambda - \|r_t\|^2)}{\|r_t\|^2 + \|w_t\|^2+ (\lambda-\|r_t\|^2)^2} \\
\label{eq:isvd.4}
\eta_t := & \frac{1}{\sigma_t} \arcsin \beta = \frac{1}{\sigma_t} \arccos (\alpha \|w_t \|),
\end{align}
\end{subequations}
and define the $d \times d$ orthogonal matrix $W_t$ by
\beq \label{eq:def.Wt}
W_t := \left[ \frac{w_t}{\|w_t\|}  \, | \, Z_t \right],
\eeq
where $Z_t$ is a $d \times d-1$ orthonormal matrix whose columns span
the orthogonal complement of $w_t$. For these choices of $\eta_t$
and $W_t$, the  iterates $U_{t+1}$ generated by
Algorithms~\ref{grouse:partial} and \ref{isvd:partial} are identical.
\end{theorem}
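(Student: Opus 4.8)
The plan is to reduce the whole statement to a single $(d+1)\times d$ matrix identity. First, since $U_t$, $v_t$ and $\Omega_t$ agree in the two algorithms, they solve the same least-squares subproblem, so $w_t$, $p_t=U_tw_t$ and the residual $r_t$ are the same in both (in Algorithm~\ref{grouse:partial} we have $[r_t]_{\Omega_t}=[v_t-p_t]_{\Omega_t}$ and $[r_t]_{\Omega_t^C}=0$; in Algorithm~\ref{isvd:partial} the imputed $\tv_t$ is built exactly so that $\tv_t-p_t$ is this same vector), and the normal equations for $w_t$ give $U_t^Tr_t=0$, so $[U_t \mid r_t/\|r_t\|]$ has orthonormal columns. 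Using $\|p_t\|=\|w_t\|$, rewrite the GROUSE update \eqref{eq:gpupdate} as $U_{t+1}^{\mathrm{GROUSE}}=[U_t \mid r_t/\|r_t\|]\,G$, where $G$ is the $(d+1)\times d$ matrix with leading $d\times d$ block $I+(\cos\theta-1)w_tw_t^T/\|w_t\|^2$ and last row $(\sin\theta/\|w_t\|)w_t^T$, and $\theta:=\sigma_t\eta_t$. Since Algorithm~\ref{isvd:partial} gives $U_{t+1}^{\mathrm{iSVD}}=[U_t \mid r_t/\|r_t\|]\,\hat U W_t$ and the common left factor has full column rank, the theorem reduces to showing $\hat U W_t=G$.

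Next I would compute the SVD \eqref{eq:isvd.util} of $M:=\left[\begin{smallmatrix}I&w_t\\0&\|r_t\|\end{smallmatrix}\right]$ by exploiting its structure. Split $\R^{d+1}$ into the orthogonal sum of $\{(z,0):z\perp w_t\}$ and the plane spanned by $(w_t/\|w_t\|,0)$ and $(0,1)$; then $MM^T$ fixes the first (eigenvalue $1$, multiplicity $d-1$) and acts on the plane as $\left[\begin{smallmatrix}1+\|w_t\|^2&\|r_t\|\|w_t\|\\\|r_t\|\|w_t\|&\|r_t\|^2\end{smallmatrix}\right]$, whose two eigenvalues are $\lambda$ of \eqref{eq:isvd.1} and $\lambda_-:=\|r_t\|^2/\lambda$ (their product is $(\det M)^2=\|r_t\|^2$). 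Evaluating the characteristic polynomial $\mu^2-(1+\|w_t\|^2+\|r_t\|^2)\mu+\|r_t\|^2$ at $\mu=1$ gives $-\|w_t\|^2<0$, so $\lambda_-<1<\lambda$; hence $\sqrt{\lambda_-}$ is the strictly smallest singular value, the dropped last column of $\tilde U_t$ is the unit $\lambda_-$-eigenvector $u_-$, and $\hat U$ is an orthonormal basis of $u_-^\perp$, namely $[\,u_+\mid E\,]$ with $u_+$ the unit $\lambda$-eigenvector and $E$ an orthonormal basis of $\{(z,0):z\perp w_t\}$. A short computation gives $u_+=N^{-1}\bigl((\lambda-\|r_t\|^2)w_t/\|w_t\|,\ \|r_t\|\|w_t\|\bigr)$ with $N^2=(\lambda-\|r_t\|^2)^2+\|r_t\|^2\|w_t\|^2$.

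Now I would multiply out $GW_t$ and match columns. One checks $G(w_t/\|w_t\|)=(\cos\theta\,w_t/\|w_t\|,\ \sin\theta)$ and $Gz=(z,0)$ for $z\perp w_t$; so with $W_t$ as in \eqref{eq:def.Wt}, $GW_t=[\,(\cos\theta\,w_t/\|w_t\|,\sin\theta)\mid(Z_t,0)\,]$. Choosing the degenerate left singular vectors of $M$ to be $(Z_t,0)$ makes the last $d-1$ columns of $GW_t$ and $\hat U$ coincide, and the first columns coincide exactly when $(\cos\theta\,w_t/\|w_t\|,\sin\theta)=u_+$, i.e. $\cos\theta=(\lambda-\|r_t\|^2)/N$ and $\sin\theta=\|r_t\|\|w_t\|/N$ (and $\lambda-\|r_t\|^2=\lambda(1-\lambda_-)>0$ makes this a genuine angle). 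Since $\theta=\sigma_t\eta_t$, this determines $\eta_t$; the identities $\lambda\lambda_-=\|r_t\|^2$ and $(\lambda-\|r_t\|^2)(\|r_t\|^2-\lambda_-)=\|r_t\|^2\|w_t\|^2$ then let one re-express $\cos\theta$, $\sin\theta$ and $N$ as the quantities in \eqref{eq:isvd.2}--\eqref{eq:isvd.4} by routine algebra. One also records $G^TG=I$ (immediate from $(w_tw_t^T/\|w_t\|^2)^2=w_tw_t^T/\|w_t\|^2$ and $\cos^2\theta+\sin^2\theta=1$), confirming that $U_{t+1}^{\mathrm{GROUSE}}$ indeed has orthonormal columns.

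The step I expect to be the main obstacle is the SVD computation together with this final reconciliation: one must argue carefully that it is the $\lambda_-$ singular direction — not one of the repeated unit singular values — that is discarded (this is exactly where $\lambda_-<1$, hence the assumption $w_t\neq0$, is used), and then push $\cos\theta,\sin\theta$ through the eigenvalue identities into the precise expressions \eqref{eq:isvd.2}--\eqref{eq:isvd.4}. A secondary subtlety is non-uniqueness of the SVD: the repeated singular value $1$ leaves the corresponding block of $\tilde U_t$ free up to rotation and the sign of $u_+$ is free, so ``identical $U_{t+1}$'' must be understood for the aligned choices (degenerate left singular vectors equal to $(Z_t,0)$, sign of $u_+$ with $\cos\theta\ge0$); the block form \eqref{eq:def.Wt}, with first column parallel to $w_t$ and the rest spanning $w_t^\perp$, is precisely what makes the alignment possible — without it, $\hat U W_t$ and $G$ would still span the same subspace but need not be equal as matrices.
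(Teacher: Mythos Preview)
Your approach is essentially the same as the paper's: both diagonalize $MM^T$ by splitting off the $(d-1)$-dimensional eigenspace $\{(z,0):z\perp w_t\}$ from the $2$-plane carrying the nontrivial eigenvalues, identify $\lambda$ as the larger root of the same quadratic $\mu^2-(1+\|w_t\|^2+\|r_t\|^2)\mu+\|r_t\|^2$, assemble $\hat U=[\,u_+\mid (Z_t,0)\,]$, and then multiply by the rotation built from $w_t/\|w_t\|$ and $Z_t$ to recover the GROUSE rank-one form. Your $2\times2$ reduction is a bit cleaner than the paper's Schur-complement determinant computation, and be aware that what you actually establish is $GW_t=\hat U$, i.e.\ $\hat U W_t^{T}=G$ rather than the $\hat U W_t=G$ you announced---the paper's own proof has exactly the same $W_t$ versus $W_t^{T}$ slip, which is harmless because the algorithm allows $W_t$ to be any orthogonal matrix.
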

\begin{proof}
We drop the subscript $t$ freely throughout the proof. 

We first derive the structure of the matrix $\hat{U}_t$ in
Algorithm~\ref{isvd:partial}, which is key to the update formula in
this algorithm. We have from \eqnok{eq:isvd.util} that 
\beq \label{eq:isvd.9}
\left[ \begin{matrix} I & w \\ 0 & \|r\| \end{matrix} \right]
\left[ \begin{matrix} I & 0 \\ w^T & \|r\| \end{matrix} \right] =
\left[ \begin{matrix} I + ww^T & \|r\|w \\ \|r\|w^T & \|r\|^2 \end{matrix}\right] =
\tilde{U} \tilde{\Sigma}^2 \tilde{U}^T,
\eeq
and thus the columns of $\tilde{U}$ are eigenvectors of this product
matrix. We see that the columns of the $d \times (d-1)$ orthonormal
matrix $Z_t$ defined in \eqnok{eq:def.Wt} can be used to construct a
set of eigenvectors that correspond to the eigenvalue $1$, since
\beq \label{eq:isvd.10}
\left[ \begin{matrix} I + ww^T & \|r\|w \\ \|r\|w^T & \|r\|^2 \end{matrix}\right]
\left[ \begin{matrix} Z_t \\ 0 \end{matrix} \right] =
\left[ \begin{matrix} Z_t \\ 0 \end{matrix} \right].
\eeq
Two eigenvectors and eigenvalues remain to be determined. Using
$\lambda$ to generally denote one of these two eigenvalues and $(y^T
\, : \, \beta)^T$ to denote the corresponding eigenvector, we have
\beq \label{eq:isvd.11}
\left[ \begin{matrix} I + ww^T & \|r\|w \\ \|r\|w^T & \|r\|^2 \end{matrix}\right]
\left[ \begin{matrix} y \\ \beta \end{matrix} \right] =
\lambda \left[ \begin{matrix} y \\ \beta \end{matrix} \right].
\eeq
The first block row of this expression yields 
\[
y + w (w^Ty + \|r\| \beta) = \lambda y,
\]
which implies that $y$ has the form $\alpha w$ for some $\alpha \in
\R$. By substituting this form into the two block rows from
\eqnok{eq:isvd.11}, we obtain
\begin{align} 
&\alpha (1-\lambda) w + w (\alpha \|w\|^2 + \|r\| \beta) = 0 \;\; \nonumber \\
&\Rightarrow \;\; \alpha (1+\|w\|^2 - \lambda) + \|r\| \beta = 0, \label{eq:isvd.12}
\end{align}
and
\beq \label{eq:isvd.13}
\alpha \| r\| \|w\|^2 + (\|r\|^2 - \lambda) \beta = 0.
\eeq
We require also that  the vector 
\[
\left[ \begin{matrix} y \\ \beta \end{matrix} \right] = 
\left[ \begin{matrix} \alpha w \\ \beta \end{matrix} \right]
\]
has unit norm, yielding the additional condition
\beq \label{eq:isvd.14}
\alpha^2 \|w\|^2 + \beta^2 = 1.
\eeq
(This condition verifies the equality between the ``$\arcsin$'' and
``$\arccos$'' definitions in \eqnok{eq:isvd.4}.)

To find the two possible values for $\lambda$, we seek non-unit roots
of the characteristic polynomial for \eqnok{eq:isvd.9} and make use of
the Schur form
\[
\det \left( \left[ \begin{matrix} A & B \\ C & D \end{matrix} \right] \right) =
(\det D ) \det (A-BD^{-1}C),
\]
to obtain
\begin{align*}
\det & \left[ \begin{matrix} I + ww^T - \lambda I & \|r\| w \\
\|r\|w^T & \|r\|^2 - \lambda \end{matrix} \right] \\
&= (\|r\|^2 - \lambda) \det \left[ (1-\lambda) I + ww^T -
\frac{\|r\|^2}{\|r\|^2-\lambda} ww^T \right] \\
&= (\|r\|^2 - \lambda) \det \left[ (1-\lambda) I -
\frac{\lambda}{\|r\|^2-\lambda} ww^T \right] \\
&= (1-\lambda)^d (\|r\|^2 - \lambda) 
\left( 1- \frac{\lambda \|w\|^2}{(\|r\|^2-\lambda)(1-\lambda)} \right) \\
&= (1-\lambda)^{d-1} \left( (\|r\|^2-\lambda) (1-\lambda) - \lambda \|w\|^2 \right) \\
&= (1-\lambda)^{d-1} (\lambda^2 - \lambda (\|w\|^2 + \|r\|^2+1) + \|r\|^2),
\end{align*}
where we used $\det (I+aa^T) = 1+\|a\|^2$. Thus the two non-unit eigenvalues 
are the roots of the quadratic
\beq \label{eq:isvd.15}
\lambda^2 - \lambda (\|w\|^2 + \|r\|^2+1) + \|r\|^2.
\eeq
When $r \neq 0$ and $w \neq 0$, this quadratic takes on positive
values at $\lambda=0$ and when $\lambda \uparrow \infty$, while the
value at $\lambda=1$ is negative. Hence there are two roots, one in
the interval $(0,1)$ and one in $(1,\infty)$. We fix $\lambda$ to the
larger root, which is given explicitly by \eqnok{eq:isvd.1}. The
corresponding eigenvalue is the first column in the matrix
$\tilde{U}_t$, and thus also in the matrix $\hat{U}_t$. It can be
shown, by reference to formulas \eqnok{eq:isvd.1} and
\eqnok{eq:isvd.15}, that the values of $\beta$ and $\alpha$ defined by
\eqnok{eq:isvd.2} and \eqnok{eq:isvd.3}, respectively, satisfy the
conditions \eqnok{eq:isvd.12}, \eqnok{eq:isvd.13},
\eqnok{eq:isvd.14}. We can now assemble the leading $d$ eigenvectors
of the matrix in \eqnok{eq:isvd.9} to form the matrix $\hat{U}$ as
follows:
\[
\hat{U} := \left[ \begin{matrix} \alpha w & Z_t \\ \beta & 0 \end{matrix}
\right].
\]
Thus, with $W_t$ defined as in \eqnok{eq:def.Wt}, we obtain
\[
\hat{U} W_t^T = 
\left[ \begin{matrix} \alpha w & Z_t \\ \beta & 0 \end{matrix}
\right] 
\left[ \begin{matrix} \frac{w^T}{\|w\|} \\ Z_t^T \end{matrix} \right] =
\left[ \begin{matrix} \frac{\alpha}{\|w\|} ww^T + Z_t Z_t^T \\
\frac{\beta}{\|w\|} w^T \end{matrix} \right].
\]
Therefore, we have from the update formula for
Algorithm~\ref{isvd:partial} that
\begin{align*}
U_{t+1} & = \left[ \begin{matrix} U_t & \frac{r}{\|r\|} \end{matrix} \right]
\hat{U} W_t^T \\
&= U_t \left( \frac{\alpha}{\|w\|} ww^T + Z_t Z_t^T \right) +
\beta \frac{r}{\|r\|} \frac{w^T}{\|w\|}.
\end{align*}
By orthogonality of $W_t$, we have
\[
I = WW^T = \frac{ww^T}{\|w\|^2} + Z_t Z_t^T \; \Rightarrow \;
Z_t Z_t^T - I - \frac{ww^T}{\|w\|^2}.
\]
Hence, by substituting in the expression above, we obtain
\begin{align*}
U_{t+1} &= U_t \left( \alpha \frac{ww^T}{\|w\|} + 
\left( I-\frac{ww^T}{\|w\|^2} \right)  \right) +
\beta \frac{r}{\|r\|} \frac{w^T}{\|w\|} \\
&= U_t + \left[ (\alpha \|w\|-1) \frac{w}{\|w\|} + \beta \frac{r}{\|r\|} \right]
\frac{w^T}{\|w\|},
\end{align*}
which is identical to the update formula in
Algorithm~\ref{grouse:partial} provided that 
\[
\cos \sigma_t \eta_t = \alpha \|w_t \|, \quad
\sin \sigma_t \eta_t = \beta.
\]
These relationships hold because of the definition \eqnok{eq:isvd.4}
and the normality relationship \eqnok{eq:isvd.14}.
\end{proof}

\begin{figure*}
\includegraphics[width=7.1in]{./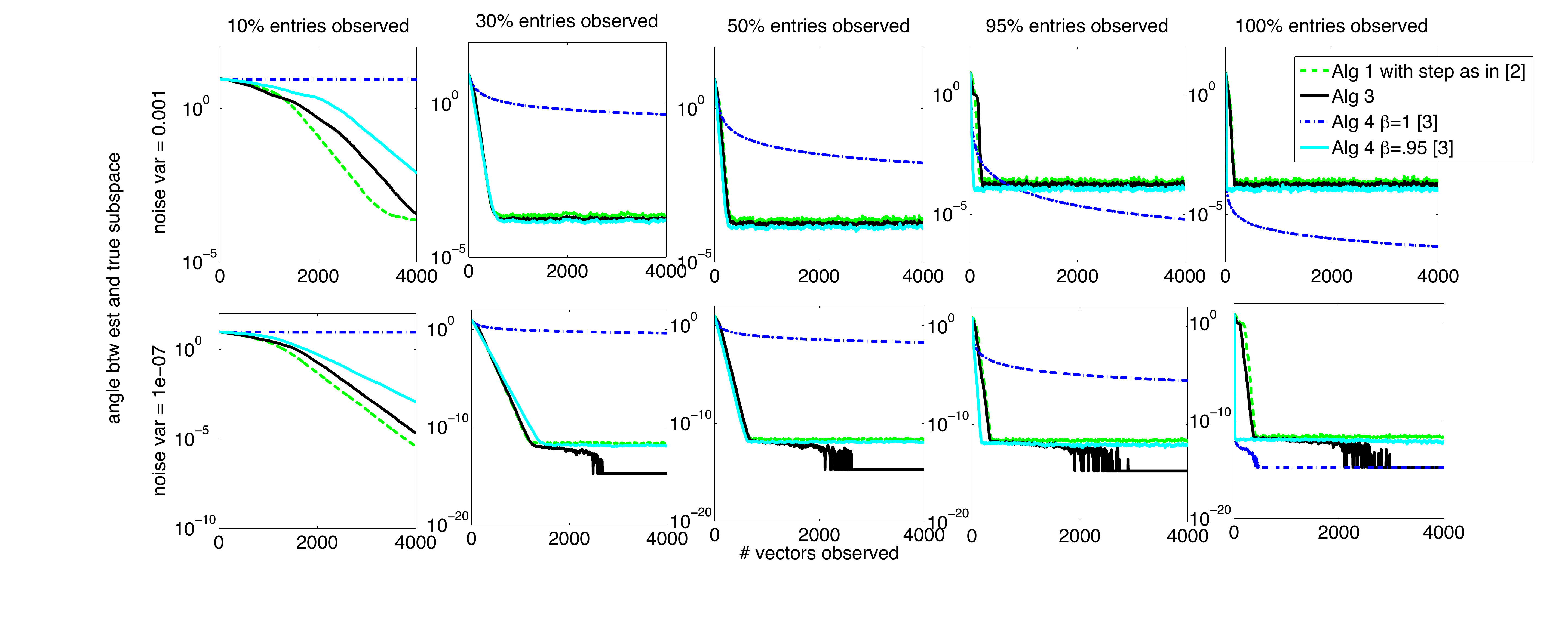} 
\caption{Results for the algorithms described in this
  paper. Algorithm~\ref{isvd:brand} with $\beta=1$ and full data is
  equivalent to the original incermental SVD
  (Algorithm~\ref{isvd:full}). This algorithm performs the best when
  all entries are observed or when just a small amount of data is
  missing and noise is present. Algorithm~\ref{isvd:brand} with
  $\beta=0.95$ and full data at first converges quickly as with
  $\beta=1$ but flatlines much earlier.  GROUSE
  (Algorithm~\ref{grouse:partial}) with the step as prescribed
  in~\cite{grouseconverge} does the best when a very small fraction of
  entries are observed, approaching the theoretical minimum
  (see~\cite{grouseconverge} for details). With low noise and missing
  data, our iSVD method (Algorithm~\ref{isvd:partial}) averages out
  the noise, given enough iterations. Otherwise the algorithms perform
  equivalently. }
\end{figure*}

\begin{algorithm}
\caption{Another iSVD approach for Partial Data~\cite{brand2002incremental} } \label{isvd:brand}
\begin{algorithmic}
\STATE{Given $U_0$, an arbitrary $n \times d$ orthonormal matrix, with $0<d<n$; $\Sigma_0$, a $d \times d$ diagonal matrix of zeros which will later hold the singular values.}
\STATE{Set $t:=1$;}
\REPEAT 
\STATE{Compute $w_t, p_t, r_t$ as in Algorithm~\ref{isvd:partial}.}
\STATE{Compute the SVD of the update matrix:}
\begin{equation*}
\left[ \begin{matrix} \beta \Sigma_t & w_t \\ 0 & \|r_t\| \end{matrix}
  \right]  =
\hat{U} \hat{\Sigma} \hat{V}^T,
\end{equation*}
\STATE{for some scalar $\beta\leq 1$ and set}
\begin{align*}
U_{t+1} &:=  \left[ \begin{matrix} U_t & \frac{r_t}{\|r_t\|} \end{matrix} \right] \hat{U}, \quad
\Sigma_{t+1} := \hat{\Sigma}.
\end{align*}
\STATE{$t:=t+1$;}
\UNTIL{termination}
\end{algorithmic}
\end{algorithm}

\section{Simulations}

To compare the algorithms presented in this note, we ran simulations
as follows. We set $n=200$ and $d=10$, and defined $\bar{U}$ (whose
columns span the target subspce $\cS$) to be a random matrix with
orthonormal columns. The vectors $v_t$ were generated as $\bar{U}
s_t$, where the components of $s_t$ are $\mathcal{N}(0,1)$ i.i.d.  We
also computed a different $n \times d$ matrix with
orthonormal columns, and used that to initialize all algorithms.  
We compared the GROUSE algorithm
(Algorithm~\ref{grouse:partial}) with our proposed missing data iSVD
(Algorithm~\ref{isvd:partial}). Although, as we show in this note,
these algorithms are equivalent for a particular choice of $\eta_t$,
we used the different choice of this parameter prescribed in
~\cite{grouseconverge}.
Finally, we compared to the incomplete data iSVD proposed
in~\cite{brand2002incremental}, which is summarized in
Algorithm~\ref{isvd:brand}. This approach requires a parameter $\beta$
which down-weights old singular value estimates. We obtained the
performance for $\beta=0.95$; performance of this approach degraded
for values of $\beta$ less than $0.9$. The error metric on the y-axis is
$d-\|U_t^T\bar{U}\|_F^2$; see \cite{grouseconverge} for details of
this quantity.

\section{Conclusion}

We have shown an equivalence between GROUSE and a modified incemental
SVD approach. The equivalence is of interest because the two methods
are motivated and constructed from different perspectives --- GROUSE
from an optimization perspective, and incremental SVD from linear
algebra perspective.


{\small \bibliographystyle{plain} \bibliography{grouseisvd} }

%
%

\end{document}